\documentclass[11pt]{article} 
\usepackage[margin=1.2in]{geometry}
\usepackage[utf8]{inputenc}
\usepackage[T1]{fontenc}
\usepackage[english]{babel}
\usepackage{lmodern} 
\usepackage{graphicx}
\usepackage{wrapfig}
\usepackage{subfig}
\usepackage{caption}
\usepackage{paralist}
\usepackage{microtype}
\usepackage{enumitem}

\graphicspath{{./figures/}}

\newcommand{\titel}{Compact Cactus Representations of all Non-Trivial Min-Cuts}

\usepackage{algorithm} 
 
\usepackage[noend]{algpseudocode}

\usepackage[usenames]{color} 
\definecolor{hellblau}{rgb}{0.2,0.4,1} 
\definecolor{dunkelblau}{rgb}{0,0,0.8}
\definecolor{dunkelgruen}{rgb}{0,0.5,0}
\usepackage[
pdftex,
colorlinks,
linkcolor=dunkelblau,
urlcolor=dunkelblau,
citecolor=dunkelgruen,
bookmarks=true,
linktocpage=true,
pdftitle={\titel},
pdfauthor={},
pdfsubject={}
]{hyperref} % URLs, Links in Farbe, PDF-Autor, muss nach algorithm geladen werden
\urlstyle{same} 

\usepackage{pdfpages}

\usepackage{amsmath} 
\usepackage{amsthm} 
\usepackage{amsfonts} 
\usepackage{amssymb}
\theoremstyle{plain} 
\newtheorem{satz}{Satz} 
\newtheorem{theorem}[satz]{Theorem}
\newtheorem{lemma}[satz]{Lemma}

\newtheorem{corollary}[satz]{Corollary}
\theoremstyle{remark} 
 
\theoremstyle{definition}

\providecommand{\customgenericname}{}
\newcommand{\newcustomtheorem}[2]{%
  \newenvironment{#1}[1]
  {%
   \renewcommand\customgenericname{#2}%
   \renewcommand\theinnercustomgeneric{##1}%
   \innercustomgeneric
  }
  {\endinnercustomgeneric}
}
\newcustomtheorem{customtheorem}{Theorem}
\newcustomtheorem{customlemma}{Lemma}
\newcustomtheorem{customproposition}{Proposition}
\newcustomtheorem{customcorollary}{Corollary}

\begin{document}
	\title{\titel}
	\author{On-Hei S.\ Lo\thanks{This research is supported by the grant SCHM 3186/1-1 (270450205) from the Deutsche Forschungsgemeinschaft (DFG, German Research Foundation).}\\Institute of Mathematics\\TU Ilmenau, Germany
		\and Jens M.\ Schmidt\addtocounter{footnote}{-1}\footnotemark\\Institute of Mathematics\\TU Ilmenau, Germany
		\and Mikkel Thorup\thanks{Mikkel Thorup's research is supported by his Advanced Grant DFF-0602-02499B from the Danish Council for Independent Research and by his Investigator Grant 16582, Basic Algorithms Research Copenhagen (BARC), from the VILLUM Foundation.}\\Department of Computer Science\\University of Copenhagen, Denmark}
	\date{}
	\maketitle

	\begin{abstract}
		Recently, Kawarabayashi and Thorup presented the first deterministic edge-connectivity recognition algorithm in near-linear time. A crucial step in their algorithm uses the existence of vertex subsets of a simple graph $G$ on $n$ vertices whose contractions leave a multigraph with $\tilde{O}(n/\delta)$ vertices and $\tilde{O}(n)$ edges that preserves all non-trivial min-cuts of $G$, where $\delta$ is the minimum degree of $G$ and $\tilde{O}$ hides logarithmic factors.
		
		We present a simple argument that improves this contraction-based sparsifier by eliminating the poly-logarithmic factors, that is, we show a contraction-based sparsification that leaves $O(n/\delta)$ vertices and $O(n)$ edges, preserves all non-trivial min-cuts and can be computed in near-linear time $\tilde{O}(m)$, where $m$ is the number of edges of $G$. We also obtain that every simple graph has $O((n/\delta)^2)$ non-trivial min-cuts.
		
		Our approach allows to represent all non-trivial min-cuts of a graph by a cactus representation, whose cactus graph has $O(n/\delta)$ vertices. Moreover, this cactus representation can be derived directly from the standard cactus representation of all min-cuts in linear time. We apply this compact structure to show that all min-cuts can be explicitly listed in $\tilde{O}(m) + O(n^2 / \delta)$ time for every simple graph, which improves the previous best time bound $O(nm)$ given by Gusfield and Naor.
	\end{abstract}
	
	%\begin{center}
	%\small \textbf{keywords:}
	%\end{center}

	\section{Introduction}
	Edge-connectivity and the structure of (near-)minimum cuts of graphs have been studied intensively for the last 60 years and proved to have a wide range of real-world applications such as network reliability and information retrieval. Many of the discovered structures like Gomory-Hu trees~\cite{Gomory1961}, cactus representations~\cite{Dinits1976} and the lattice of minimum $s$-$t$-cuts led to increasingly faster algorithms for recognizing, listing or counting various (near-)minimum cuts of graphs. These structures fall into the field of \emph{graph sparsifiers}, which decrease the graph size while preserving certain connectivity properties.
	
	In this paper, we demonstrate that a very simple contraction argument on cactus representations gives a sparsifier preserving non-trivial min-cuts that surpasses the best ones known, where a cut is \emph{trivial} if either it or its complement is a singleton. It turns out one can apply it to enumerate min-cuts efficiently.

	\subsection{Previous Work}
	Recently, Kawarabayashi and Thorup~\cite{Kawarabayashi2018} presented the first deterministic min-cut algorithm with near-linear running time $O(m \log^{12}n)$ for simple graphs. They showed that vertex sets of the input graph can be determined and contracted in near-linear time such that the remaining graph has only $\tilde{O}(n/\delta)$ vertices, $\tilde{O}(n)$ edges and preserves all non-trivial min-cuts of the original graph~\cite[Theorem~1.3]{Kawarabayashi2018}. This contraction-based sparsification implies a min-cut algorithm in near-linear time, as the contractions leave a graph on which Gabow's algorithm~\cite{Gabow1995} can be applied, which runs itself in time $\tilde{O}(\lambda m)$, where $\lambda$ denotes the edge-connectivity. Subsequently, Henzinger, Rao and Wang~\cite{Henzinger2017} improved the running time of this min-cut algorithm to $O(m \log^2 n \log \log^2 n)$ by replacing its diffusion subroutine with a flow-based one; this algorithm relies also on contraction-based sparsification. We will first focus on the question to which extent such contraction-based sparsifiers can be improved.
	
	Cactus representation is well-known for its capability of storing min-cuts in a compact way. It was first shown by~\cite{Dinits1976} that one can construct a cactus representation of $O(n)$ size which represents all min-cuts. Later, Nagamochi and Kameda~\cite{Nagamochi1994} gave two more forms of cactus representation with the same size bound. We will introduce a cactus representation with a better size bound, which represents all non-trivial min-cuts.
	
	How fast one can enumerating all min-cuts comes up as a natural question. For instance, counting the number of min-cuts can be seen as a measure for network reliability. Gusfield and Naor~\cite{Gusfield1993} showed that all min-cuts can be listed explicitly in $O(nm)$ time for every simple graph. We will apply the new cactus representation to improve this time bound.
	
	\subsection{Our Results}
	We give an asymptotically optimal improvement of the bounds of the contraction-based sparsifier of Kawarabayashi and Thorup~\cite[Theorem~1.1]{Kawarabayashi2018} by eliminating its poly-logarithmic factors. Hence, every simple graph can be sparsified in near-linear time by contractions of vertex subsets such that $O(n/\delta)$ vertices and $O(n)$ edges are left and every non-trivial min-cut is preserved. We provide not only the optimal bounds but also an insight into how these non-trivial min-cuts can be represented by a specific cactus representation.
	
	For a graph $G$, let $\mathcal{C}(G)$ be the set of all min-cuts of $G$ and let $\mathcal{NC}(G)$ be the set of all non-trivial min-cuts of $G$. We need to generalize cactus representations to represent proper subsets of $\mathcal{C}(G)$ instead of the usual set $\mathcal{C}(G)$: For a subset $\mathcal{S}$ of $\mathcal{C}(G)$, a \emph{cactus representation for $\mathcal{S}$} represents every min-cut in $\mathcal{S}$ by some min-cut of a cactus graph, but not necessarily all such min-cuts (see the next section for a precise definition). In particular, a cactus representation for $\mathcal{C}(G)$ is the usual cactus representation.
	
	We will first show that a cactus representation for $\mathcal{C}(G)$ can be transformed to a cactus representation for $\mathcal{NC}(G)$ whose cactus graph has only $O(n/\delta)$ vertices (Theorem~\ref{thm:bb}). We will then prove that simply contracting the vertex sets that correspond to the cactus vertices in the new cactus representation gives our main result, the sparsifier mentioned above (Theorem~\ref{thm:main}).
	
	\begin{theorem} \label{thm:bb}
		Given a simple graph $G$ on $n$ vertices and minimum degree $\delta$, a cactus representation for $\mathcal{C}(G)$ can be transformed in linear time to a cactus representation for $\mathcal{NC}(G)$ whose cactus graph has $O(n/\delta)$ vertices.
	\end{theorem}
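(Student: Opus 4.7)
The starting observation is a simple degree count in $G$. If $(S,\bar S)$ is a non-trivial min-cut, then $|S|\ge 2$ and, since $G$ is simple and $\lambda\le\delta$,
\[
|S|\delta \;\le\; \sum_{v\in S}\deg(v) \;=\; 2|E(G[S])|+\lambda \;\le\; |S|(|S|-1)+\delta,
\]
which simplifies to $|S|\ge\delta$; by symmetry, $|\bar S|\ge\delta$. Hence every non-trivial min-cut splits $V(G)$ into two pieces of size at least $\delta$.

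Given a cactus representation $R=(V_R,E_R,\phi)$ for $\mathcal{C}(G)$, every cactus min-cut is either a tree-edge cut or a pair of edges on a common cycle, and induces the $G$-cut $(\phi^{-1}(T_1),\phi^{-1}(T_2))$. Call a connected sub-cactus $U\subseteq R$ \emph{light} if its total $\phi$-load $\sum_{v\in V(U)}|\phi^{-1}(v)|$ is less than $\delta$. By the balance inequality above, every cactus cut whose small side is contained in such a $U$ corresponds to a trivial min-cut of $G$; hence $U$ may be contracted to a single node without losing any non-trivial min-cut.

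The plan is to construct $R'$ in two passes, with all $\phi$-loads precomputed by one DFS of $R$. First, a \emph{block-tree pass}: root the block-cut tree of $R$ at any heavy block and, for every sub-cactus hanging off a single cut-vertex whose total $\phi$-load is less than $\delta$, contract it into that cut-vertex. Second, a \emph{cycle pass}: walk around each remaining cycle and contract every maximal arc of consecutive cycle-nodes (together with their already-reduced side-branches) whose combined $\phi$-load is less than $\delta$. After both passes, every leaf of $R'$ and every maximal cycle-arc between two attachment points carries $\ge\delta$ vertices of $V(G)$, so these ``atomic'' pieces number at most $n/\delta$; the standard block-tree accounting for cacti (internal nodes $=O(\text{leaves})$ once empty degree-two nodes are suppressed) then yields $|V(R')|=O(n/\delta)$. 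All bookkeeping is linear time.

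The main obstacle is the cycle pass: unlike tree edges, the edges of a cycle encode $\Theta(|C|^2)$ min-cuts simultaneously, so one has to show that contracting several disjoint light arcs of the same cycle never destroys a non-trivial cut. This is exactly where the $|S|,|\bar S|\ge\delta$ lemma is needed: for any non-trivial cycle cut, both of the two arcs defined by the cut carry $\phi$-load $\ge\delta$ and therefore cannot lie inside a single light contracted arc; after the contraction, the same partition is still representable by a two-edge cut of the quotient cycle. Handling cycle-nodes that carry heavy side-branches, and verifying that the quotient $R'$ remains a cactus rather than degenerating into a multigraph with loops, is the main case analysis to be carried out.
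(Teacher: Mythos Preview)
Your balance inequality $|S|,|\bar S|\ge\delta$ for non-trivial min-cuts is correct and is indeed the engine behind the size bound; the paper uses it in the same way. The block-tree pass is also sound: any cactus cut entirely inside a light sub-cactus has one side of load $<\delta$, hence of load exactly $1$ by the balance inequality, so only trivial cuts are lost.

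The gap is in the cycle pass. Your justification that ``both of the two arcs defined by the cut carry $\phi$-load $\ge\delta$ and therefore cannot lie inside a single light contracted arc'' is a non-sequitur. What you need is that neither of the two \emph{cut edges} is internal to a contracted arc; the fact that neither \emph{side} fits inside a light arc does not imply this. Concretely, take a cycle $C$ of length $\ge 4$ containing a $1$-junction singleton $v_i$. The cut $(e_i,e_j)$ with $e_i=v_{i-1}v_i$ and $j\ne i,i+1$ separates $\{v_i,\dots,v_{j-1}\}$ from the rest; if $v_{i+1}$ has load $\ge\delta$ this cut is non-trivial. Merging $v_i$ into a neighbour deletes $e_i$, and the surviving cuts $(e_{i+1},e_j)$ and $(e_{i-1},e_j)$ of the quotient cycle give \emph{different} partitions of $V(G)$. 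So any contraction that shortens a long cycle by absorbing a singleton destroys genuine non-trivial min-cuts. If instead you interpret ``contract a light arc'' as only identifying its nodes (so a length-$1$ arc is left alone), then a separate argument---that no two $1$-junction singletons are adjacent in a long cycle (this is Lemma~\ref{lem:no1junctionneighbors} and uses simplicity of $G$ via Lemma~\ref{lem:edgesbetweenneighbors})---shows all light arcs have length $1$, and your pass is vacuous; but then your counting claim that ``every maximal cycle-arc between two attachment points carries $\ge\delta$ vertices'' is false, and the size bound does not follow.

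This is precisely why the paper does \emph{not} try to shorten long cycles. It applies only local modifications to $2$- and $3$-cycles (where every destroyed cut is provably trivial) and to empty $2$-junctions, and then proves the $O(n/\delta)$ bound indirectly via the xylem tree: after pruning $1$-junction singletons (at most half of each cycle by Lemma~\ref{lem:no1junctionneighbors}), leaves carry load $\ge\delta$, and every lean path of bounded length is shown to carry $\Omega(\delta)$ load. The adjacency restriction on singletons, which you never invoke, is what makes the count go through without touching long cycles.
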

	
	We remark that by detailed calculation it can be shown that the cactus graph mentioned above has less than $30n / \delta$ vertices.
	
	\begin{theorem} \label{thm:main}
		Let $G$ be a simple graph on $n$ vertices, $m$ edges and minimum degree $\delta$. Then vertex sets of $G$ can be computed and contracted in time $\tilde{O}(m)$ such that the remaining multigraph has $O(n/\delta)$ vertices, $O(n)$ edges and preserves all non-trivial min-cuts of $G$.
	\end{theorem}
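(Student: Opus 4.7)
The plan is to apply Theorem~\ref{thm:bb} directly: obtain a compact cactus representation for $\mathcal{NC}(G)$ and then contract along its fibers. First I would compute a cactus representation $(\mathcal{T}, \phi)$ for $\mathcal{C}(G)$ in time $\tilde{O}(m)$, combining a known near-linear min-cut algorithm for simple graphs with a standard cactus construction. Applying Theorem~\ref{thm:bb} then produces, in additional linear time, a cactus representation $(\mathcal{T}', \phi')$ for $\mathcal{NC}(G)$ whose cactus graph $\mathcal{T}'$ has $k = O(n/\delta)$ vertices.

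Next I would form the vertex sets $V_i := (\phi')^{-1}(v_i)$ for each $v_i \in V(\mathcal{T}')$ (some $V_i$ may be empty) and let $H$ be the multigraph obtained by contracting each non-empty $V_i$ to a single vertex, discarding resulting self-loops and retaining parallel edges. Clearly $|V(H)| \le k = O(n/\delta)$, and the contraction itself is carried out in $O(n+m)$ time once the partition is read off. Preservation of all non-trivial min-cuts follows from the defining property of the cactus representation for $\mathcal{NC}(G)$: any such cut of $G$ is represented by a cut of $\mathcal{T}'$ that partitions the $v_i$'s, and this partition induces a cut of $H$ of the same value.

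The heart of the proof is the edge bound $|E(H)| = O(n)$. For this I would use the structural identity that in a cactus representation, the weighted cactus degree of $v_i$ equals the external $G$-degree $e_G(V_i, V(G)\setminus V_i)$ of its fiber whenever $V_i \neq \emptyset$, and merely upper-bounds it when $V_i = \emptyset$. This yields
\[
2|E(H)| \;=\; \sum_{i:\, V_i \neq \emptyset} e_G(V_i, V(G)\setminus V_i) \;\le\; \sum_{i \in V(\mathcal{T}')} \deg^{w}_{\mathcal{T}'}(v_i) \;=\; 2\,W(\mathcal{T}'),
\]
where $W(\mathcal{T}')$ is the total edge weight of $\mathcal{T}'$. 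A cactus graph on $k$ vertices has $O(k)$ edges, and every edge weight of $\mathcal{T}'$ is at most $\lambda$ (bridges have weight $\lambda$, cycle edges have weight $\lambda/2$); since $\lambda \le \delta$ in any simple graph, this gives $W(\mathcal{T}') \le O(k\lambda) = O((n/\delta)\cdot \delta) = O(n)$, whence $|E(H)| = O(n)$.

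The main obstacle I anticipate is justifying the cut-value identity $\deg^{w}_{\mathcal{T}'}(v_i) = e_G(V_i, V(G)\setminus V_i)$ for non-empty fibers in the transformed cactus: the definition of cactus representation only directly controls values on represented min-cuts, not on the singleton ``star'' cut $(\{v_i\}, V(\mathcal{T}')\setminus\{v_i\})$ when it does not belong to $\mathcal{NC}(G)$. This identity should, however, be a structural invariant of the cactus construction — the weighted degree of each cactus vertex faithfully records the external degree of its fiber through each contraction and unfolding step, with empty nodes only inflating the weighted degree beyond what is needed — and one needs to check it is preserved by the transformation of Theorem~\ref{thm:bb}. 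Once this is established, the remaining ingredients (vertex count, min-cut preservation, and the near-linear running time) are routine.
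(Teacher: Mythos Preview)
Your plan coincides with the paper's: compute $(\mathcal{K}',\varphi')$ via Theorem~\ref{thm:bb} and contract each fiber $\varphi'^{-1}(v)$. The only difference is in the edge count. The paper observes that the edges of $\mathcal{K}'$ can be covered by $|V(\mathcal{K}')|-1$ uncrossing cactus min-cuts of size~$2$, so the surviving $G$-edges are covered by $|V(\mathcal{K}')|-1$ min-cuts of $G$ of size~$\lambda$, giving $|E(H)|\le\lambda(|V(\mathcal{K}')|-1)=O(n)$. Your degree-sum variant reaches the same bound, but the ``main obstacle'' you flag is not one: you never need the \emph{equality} $\deg^{w}_{\mathcal{K}'}(v_i)=d_G(V_i)$, only the inequality $d_G(V_i)\le(\lambda/2)\deg_{\mathcal{K}'}(v_i)$, and that follows immediately from clause~(ii) of \emph{any} cactus representation (hence in particular of $(\mathcal{K}',\varphi')$), with no extra invariant to track through the transformation. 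Concretely, if $v_i$ is a $j$-junction lying on cycles $C_1,\dots,C_j$, then $\overline{V_i}$ is the disjoint union of the sets $\varphi'^{-1}\bigl(V(\mathcal{K}')\setminus\mathcal{K}'[C_\ell,v_i]\bigr)$ for $\ell=1,\dots,j$, each separated from $V_i$ by the $G$-min-cut $\varphi'^{-1}(\mathcal{K}'[C_\ell,v_i])$ of size~$\lambda$; hence $d_G(V_i)\le j\lambda=(\lambda/2)\deg_{\mathcal{K}'}(v_i)$. (As a side note, in this paper the cactus is $2$-edge-connected, so there are no bridges and your weights are uniformly~$\lambda/2$.)
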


	Kawarabayashi and Thorup~\cite{Kawarabayashi2018} showed that every connected simple graph has $\tilde{O}((n/\delta)^2)$ non-trivial minimum cuts for some constant $c$. This follows directly from the well-known result in~\cite{Dinits1976} that the number of minimum cuts in any connected graph $H$ is at most $O(|V(H)|^2)$, when taking $H$ as the sparsified graph. Hence, Theorem~\ref{thm:main} implies the following fundamental new bound.
	
	\begin{corollary} \label{cor:maincor}
		Every connected simple graph $G$ has $O((n/\delta)^2)$ non-trivial min-cuts and hence $n + O((n/\delta)^2)$ min-cuts.
	\end{corollary}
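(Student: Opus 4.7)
The plan is to read off the corollary from Theorem~\ref{thm:main} together with the classical bound of Dinits, Karzanov and Lomonosov~\cite{Dinits1976} that any connected multigraph on $N$ vertices has at most $O(N^2)$ minimum cuts. Concretely, I would apply Theorem~\ref{thm:main} to $G$ to obtain a multigraph $H$ on $N = O(n/\delta)$ vertices in which every non-trivial min-cut of $G$ is preserved, and then plug this $N$ into the Dinits--Karzanov--Lomonosov bound for $H$.

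The one thing worth spelling out is that "preserves all non-trivial min-cuts" really yields an injection from $\mathcal{NC}(G)$ into the set of min-cuts of $H$. This holds because each contracted vertex set lies entirely on one side of every non-trivial min-cut of $G$; hence the bipartition of $V(G)$ induced by a cut in $\mathcal{NC}(G)$ descends to a well-defined bipartition of $V(H)$ that uniquely encodes the original one, and its number of crossing edges in $H$ equals that in $G$, namely $\lambda(G) = \lambda(H)$. Thus distinct elements of $\mathcal{NC}(G)$ remain distinct min-cuts of $H$, which gives $|\mathcal{NC}(G)| \le O(N^2) = O((n/\delta)^2)$.

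For the second clause, the trivial min-cuts contribute at most $n$ additional cuts, one for each singleton $\{v\}$ with $v \in V(G)$, so the total is $n + O((n/\delta)^2)$. There is no real obstacle; the corollary is essentially a direct substitution of Theorem~\ref{thm:main}'s vertex bound into the Dinits--Karzanov--Lomonosov estimate, and the only piece of bookkeeping is the standard observation about contraction-based sparsifiers in the previous paragraph.
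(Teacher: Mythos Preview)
Your proposal is correct and is exactly the argument the paper gives: the text immediately preceding Corollary~\ref{cor:maincor} states that the bound follows directly from the Dinits--Karzanov--Lomonosov $O(|V(H)|^2)$ estimate applied to the sparsified graph $H$ of Theorem~\ref{thm:main}. Your added remarks about why the preservation of non-trivial min-cuts yields an injection into $\mathcal{C}(H)$ and about the at most $n$ trivial min-cuts are sound and make explicit what the paper leaves implicit.
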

	
	We will show in Section~\ref{sec:tightness} that Theorems~\ref{thm:bb}+\ref{thm:main} and also Corollary~\ref{cor:maincor} are asymptotically optimal from various perspectives; all of them improve the best known results so far, whenever $\delta$ is superconstant.
	
	We apply the compact cactus representation to explicitly list all min-cuts, each as a list of the 
	cut edges. The previous best time bound of $O(nm)$ for enumerating all min-cuts of a simple graph was given by Gusfield and Naor~\cite{Gusfield1993}. We have the following improvement:
	
	\begin{theorem} \label{thm:enumeration}
		All min-cuts of a simple graph $G$ can be listed explicitly in $\tilde{O}(m) + O(n^2 / \delta)$ time, with output size $O(\lambda (n+(n/\delta)^2))=O(m+n^2/\delta)$.
	\end{theorem}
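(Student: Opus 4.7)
The plan is to enumerate min-cuts directly on the sparsified multigraph $G'$ provided by Theorem~\ref{thm:main}. First, apply Theorem~\ref{thm:main} in $\tilde O(m)$ time to obtain $G'$ on $n'=O(n/\delta)$ vertices and $m'=O(n)$ edges. Because the sparsification is by vertex-set contractions, each edge of $G'$ is an edge of $G$, so listing the edges crossing a min-cut of $G'$ immediately yields the edges crossing the corresponding non-trivial min-cut of $G$. Trivial min-cuts are exactly the singletons $\{v\}$ with $\deg_G(v)=\lambda$; these, together with their incident edges, are enumerated in $O(m)$ time by a single linear scan over $G$ (producing at most $\lambda n \le 2m$ edges in total).

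Next, build a cactus representation $\mathcal{K}$ of all min-cuts of $G'$ using any standard cactus construction running in $O(n'm')$ time (such as the Karzanov--Timofeev algorithm or its refinements); this costs $O((n/\delta)\cdot n) = O(n^2/\delta)$. The result is a cactus on $O(n/\delta)$ vertices together with a projection $\phi : V(G') \to V(\mathcal{K})$. By the classical bound of Dinits et al., $\mathcal{K}$ has $O(n'^2)=O((n/\delta)^2)$ min-cuts, each obtained by removing either one bridge of $\mathcal{K}$ or two edges from the same cactus cycle, yielding a bipartition of $V(\mathcal{K})$ whose $\phi$-preimage is the desired min-cut of $G'$.

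The main obstacle is emitting, for every one of these $\Theta((n/\delta)^2)$ cuts, its $\lambda$ cut-edges within the aggregate time budget $O(\lambda (n/\delta)^2)=O(n^2/\delta)$; a naive rescan of $G'$ per cut costs $\Theta(m' (n/\delta)^2)$, which is too slow. To resolve this I would preprocess each edge $e=u'v' \in E(G')$ in $O(n'm')=O(n^2/\delta)$ total time by performing a DFS of $\mathcal{K}$ and recording, for every cactus cycle $C$, the unique pair of cycle-vertices $(p^C_e, q^C_e)$ where the $\mathcal{K}$-walk from $\phi(u')$ to $\phi(v')$ enters and leaves $C$ (or marking $e$ as bypassing $C$). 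Then for each cycle $C$, all $\binom{|C|}{2}$ candidate cuts are enumerated by a cyclic sweep along $C$: a pair $(e_1,e_2)$ of cycle-edges cuts exactly those $G'$-edges whose pair $(p^C_e, q^C_e)$ lies on opposite arcs of $C\setminus\{e_1,e_2\}$, and with a bucketed-by-cycle-position data structure these can be emitted in time linear in their number. Since every emitted edge is actual output, the enumeration work sums to $O(\sum_{\text{cuts}}\lambda)=O(n^2/\delta)$, giving the claimed $\tilde O(m)+O(n^2/\delta)$ total running time and $O(m+n^2/\delta)$ output size.
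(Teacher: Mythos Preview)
Your proposal is essentially correct and reaches the stated bounds, but it follows a genuinely different route from the paper. The paper does \emph{not} enumerate on the contracted multigraph $G'$; it explicitly points out that $G'$ may have parallel edges, so Gusfield--Naor cannot be applied to it verbatim. Instead the paper works on the compact cactus $(\mathcal{K}_1,\varphi_1)=(\mathcal{K}',\varphi')$ already produced in Theorem~\ref{thm:bb}: it peels off the $2$-cycles into trivial DAGs, contracts them to obtain a \emph{simple} cactus $\mathcal{K}_2$, runs Gusfield--Naor's DAG construction on $\mathcal{K}_2$, and then reinserts the edges of $H$ into each DAG with the correct orientation. The bulk of the paper's argument (the two long claims in Section~4) is devoted to showing that this reinsertion preserves the closed sets, so that Gusfield--Naor's $O(\lambda)$-per-cut listing can be invoked as a black box.

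Your bucket-and-sweep idea sidesteps the DAG machinery entirely and is arguably more elementary. Two remarks that would tighten it. First, rebuilding a cactus of $G'$ from scratch is unnecessary: the cactus $\mathcal{K}'$ of Theorem~\ref{thm:bb} is already computed inside Theorem~\ref{thm:main} in $\tilde O(m)$ time, and reusing it removes any dependence on the precise running time of Karzanov--Timofeev on multigraphs (your claimed $O(n'm')$ bound is not quite standard). Second, your sweep becomes trivial once you invoke Lemma~\ref{lem:edgesbetweenneighbors}: for every $G'$-edge $e$ and every cycle $C$ met by its cactus walk, the entry and exit vertices $p^C_e,q^C_e$ are \emph{adjacent} in $C$, so each such edge is tagged by a single cactus edge of $C$; the cut-edges of the cactus min-cut $\{e_1,e_2\}$ are then simply the union of the two buckets $B_{e_1}\cup B_{e_2}$, each of size $\lambda/2$, listable in $O(\lambda)$ time with no sweep needed. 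Finally, beware of double-listing: some trivial min-cuts of $G$ survive as min-cuts of $G'$ (namely those singletons that remain singletons in $\mathcal{K}'$), which is why the paper phrases its last step as ``check for \emph{missing} trivial min-cuts'' rather than listing all of them unconditionally.
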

	
	We emphasize that, as in~\cite{Gusfield1993}, instead of vertex subset of the graph, a cut in the output is represented by the $\lambda$ edges going across some vertex subset. We do so because the vertex subsets representing min-cuts may have average size $\Theta(n)$ per each, e.g. a cycle.  
	
	The space bound of the explicit representation of all min cuts is tight as  there are simple graphs with  $\Theta(n^2 / \delta^2)$ min-cuts and where $\lambda = \Theta(\delta)$ (see Section~\ref{sec:tightness}). Moreover, for the $m$ term, we can easily find simple graphs where all edges are in some min-cut, e.g. a regular expander where the min-cuts are exactly the trivial cuts. 

	\subsection{Technical Overview}
	We will use a variant of the well-known cactus representation that is restricted to non-trivial min-cuts to find the vertex sets that have to be contracted. This cactus representation can be derived from any standard cactus representation that represents all min-cuts in linear time. Since Kawarabayashi and Thorup showed that the standard cactus representation can be found deterministically in near-linear time~\cite{Kawarabayashi2018} (alternatively, one may use the randomized Monte-Carlo algorithm in~\cite{Karger2009}), this gives a deterministic near-linear running time $\tilde{O}(m)$ to find the vertex sets to be contracted.
	
	In order to enumerate all min-cuts of a simple graph, we combine the framework of Gusfield and Naor~\cite{Gusfield1993} with our new cactus representation. We wish to apply their method to list the min-cuts on the sparsified graph obtained from vertex subset contractions. However this graph may have multiple edges. Thus we apply their algorithm in a cactus graph instead, and place back the edges of the original graph when we start listing the min-cuts.
	
	\section{Preliminaries}
	All graphs considered in this paper are non-empty and finite. Let $G:=(V,E)$ be a graph. We denote by $uv$ the edge with endvertices $u, v$ if $G$ is a undirected graph, and by $uv$ or $(u, v)$ the edge directed from $u$ to $v$ if $G$ is a directed graph. \emph{Contracting} a vertex subset $X \subseteq V$ consists in identifying all vertices in $X$ and deleting occurring self-loops but no parallel edges (we do not require that $X$ induces a connected graph in $G$). For an edge $vw$ in $G$, \emph{contracting $vw$} means contracting $\{v, w\}$.
	
	For non-empty and disjoint vertex subsets $X,Y \subset V$, let $E_G(X,Y)$ denote the set of all edges in $G$ that have one endvertex in $X$ and one in $Y$. Let further $\overline{X} := V-X$, $d_G(X,Y):=|E_G(X,Y)|$ and $d_G(X):=|E_G(X,\overline{X})|$; if $X=\{v\}$ for some vertex $v\in V$, we simply write $E_G(v,Y)$, $d_G(v,Y)$ and $d_G(v)$, respectively. A subset $\emptyset \neq X \subset V$ of a graph $G$ is called a \emph{cut} of $G$ of size $d_G(X)$. A cut $X$ of $G$ is said to be \emph{trivial} if $|X|=1$ or $|\overline{X}|=1$, and \emph{non-trivial} otherwise. By an \emph{edge cut} we mean a set $E_G(X)$ of edges for some cut $X \subset V$; we identify an edge cut $E_G(X)$ with the cut $X$ if there is no ambiguity arises. A family of cuts $X_1, \dots, X_k  \subset V$ are \emph{uncrossing} if we have $X_i \subset X_j$ or $X_i \supset X_j$ for all $i \neq j \in \{1, \dots, k\}$.
	
	Let the \emph{length} of a path (cycle) be the number of its edges; a $k$-\emph{cycle} is a cycle of length $k$. Let $\delta(G) := \min_{v\in V} d_G(v)$ be the \emph{minimum degree} of $G$.
	
	A \emph{block} of $G$ is a maximal subgraph of $G$ that does not contain a \emph{cut-vertex} (i.e. a separator made of a single vertex). For two vertices $v,w\in V$, a \emph{$v$-$w$-cut} is a vertex set $X\subseteq V$ such that exactly one of $\{v,w\}$ is in $X$. Let $\lambda_G(v,w)$ be the minimum $d_G(X)$ over all $v$-$w$-cuts $X$. Two vertices $v,w\in V$ are called \emph{$k$-edge-connected} if $\lambda_G(v,w) \geq k$. The \emph{edge-connectivity} $\lambda := \lambda(G)$ of $G$ is $\min_{v,w\in V} \lambda_G(v,w)$. We omit subscripts whenever the graph is clear from the context.
	
	We call a multigraph $\mathcal{K}$ a \emph{cactus} if it is 2-edge-connected, contains no self-loops, and every edge in $\mathcal{K}$ belongs to exactly one cycle (which may be of length 2, i.e. a pair of parallel edges). In other words, all blocks of $\mathcal{K}$ are cycles. This way, an edge cut in $\mathcal{K}$ is a min-cut if and only if it is consists of two edges from a cycle in $\mathcal{K}$. 
	
	Let $\mathcal{K}$ be a cactus and $\varphi$ be a mapping from $V(G)$ to $V(\mathcal{K})$. Given $\mathcal{S} \subseteq \mathcal{C}(G)$, we say that $(\mathcal{K}, \varphi)$ is a \emph{cactus representation} of $G$ \emph{for} $\mathcal{S}$ if (i) for every $X \in \mathcal{S}$, there is a min-cut $Y$ in $\mathcal{K}$ with $X = \varphi^{-1}(Y)$ and (ii) for every min-cut $Y$ in $\mathcal{K}$, $\varphi^{-1}(Y)$ is a min-cut in $G$. A vertex $v$ in $\mathcal{K}$ is \emph{empty} if $\varphi^{-1}(v)$ is empty, a \emph{singleton} if $\varphi^{-1}(v)$ consists of exactly one vertex of $G$, and a \emph{$k$-junction} if $v$ is contained in exactly $k$ cycles of $\mathcal{K}$. It has been proven by Dinits et al.~\cite{Dinits1976} that every graph $G$ admits a cactus representation for $\mathcal{C}(G)$. A cactus representation $(\mathcal{K}, \varphi)$ of $G$ for $\mathcal{S}$ is \emph{minimal} if no smaller cactus representation for $\mathcal{S}$ can be obtained by contracting an edge of $\mathcal{K}$ and revising $\varphi$ accordingly.

	A \emph{directed acyclic graph} $\mathcal{A}$ is a directed multigraph with no directed cycle. Given $u, v \in V(\mathcal{A})$, we say $u$ is a \emph{predecessor} (\emph{successor}) of $v$ if there is a path directed from $u$ to $v$ ($v$ to $u$) in $\mathcal{A}$. In this paper we consider only directed acyclic graphs $\mathcal{A}$ each has (unique) vertices $s_\mathcal{A}, t_\mathcal{A} \in V(\mathcal{A})$ such that $s_\mathcal{A}$ and $t_\mathcal{A}$ are a successor and a predecessor of $v$ for any $v \in V(\mathcal{A})$, respectively. For $\emptyset \neq X \subset V(\mathcal{A})$, we call $X$ a \emph{closed set} of $\mathcal{A}$ if for every $v$ in $X$, every successor of $v$ is also in $X$. It is clear that $s_\mathcal{A} \in X$ and $t_\mathcal{A} \notin X$ for any closed set $X$ of $\mathcal{A}$. A \emph{directed acyclic graph representation} (DAG) $(\mathcal{A}, \rho)$ of $G$ consists of a directed acyclic graph $\mathcal{A}$ and a mapping $\rho$ from $V(G)$ to $V(\mathcal{A})$. We also identify a DAG $(\mathcal{A}, \rho)$ with $\mathcal{A}$ when we describe $V(\mathcal{A})$ as a partition of $V(G)$. For a closed set $X$ of $\mathcal{A}$, we say $\rho^{-1}(X)$ is represented by $(\mathcal{A}, \rho)$.

	\section{Contraction-Based Sparsification}
	
	Let $G := (V, E)$ be a simple graph and $\mathcal{S} \subseteq \mathcal{C}(G)$. The following lemmas concern a cactus representation $(\mathcal{K}, \varphi)$ of $G$ for $\mathcal{S}$.

	For a vertex $v$ of a cycle $C$ in $\mathcal{K}$ and its two incident edges $e$ and $f$ in $C$, let $\mathcal{K}[C,v]$ be the component of $\mathcal{K}-e-f$ that contains $v$.
	
	\begin{lemma}[\cite{Dinits1976}] \label{lem:edgesbetweenneighbors}
		Let $u$ and $v$ be two distinct vertices in a cycle $C$ of length at least three in $\mathcal{K}$. If $u$ and $v$ are neighbors in $C$, then $G$ has exactly $\lambda /2$ edges between $\varphi^{-1}(\mathcal{K}[C, u])$ and $\varphi^{-1}(\mathcal{K}[C, v])$. Otherwise, $G$ has no edge between $\varphi^{-1}(\mathcal{K}[C, u])$ and $\varphi^{-1}(\mathcal{K}[C, v])$.
	\end{lemma}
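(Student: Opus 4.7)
The plan is to reduce both parts of the claim to a single counting identity by exploiting the partition of $V(G)$ induced by the cycle $C$. For each $w \in V(C)$, set $A_w := \varphi^{-1}(\mathcal{K}[C, w])$. Since $\mathcal{K}$ is a cactus, removing all edges of $C$ from $\mathcal{K}$ leaves exactly $|V(C)|$ components, one per vertex of $C$, so the sets $\{A_w : w \in V(C)\}$ form a partition of $V(G)$. Moreover, the two edges of $C$ incident to $w$ constitute a min-cut of $\mathcal{K}$ whose one side is $\mathcal{K}[C, w]$, so by the definition of a cactus representation $A_w$ is a min-cut of $G$ with $d_G(A_w) = \lambda$.

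Step 1 — the neighbor case. Let $g = uv$, let $e$ be the other edge of $C$ incident to $u$, and let $f$ be the other edge of $C$ incident to $v$; these are three distinct edges of $C$ because $|V(C)| \geq 3$. Removing $e$ and $f$ from $C$ leaves $g$ as the sole arc through $\{u,v\}$, so in $\mathcal{K}$ the corresponding min-cut has one side equal to $\mathcal{K}[C,u] \cup \mathcal{K}[C,v] \cup \{g\}$. Consequently $A_u \cup A_v$ is a min-cut of $G$ and $d_G(A_u \cup A_v) = \lambda$. Plugging this into the elementary identity
\[
d_G(A) + d_G(B) \;=\; d_G(A \cup B) + 2\,|E_G(A, B)|
\]
(valid for disjoint non-empty $A, B \subset V(G)$) with $A = A_u$ and $B = A_v$ gives $|E_G(A_u, A_v)| = \lambda/2$.

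Step 2 — the non-neighbor case. Using that the $A_w$'s partition $V(G)$,
\[
\lambda \;=\; d_G(A_u) \;=\; \sum_{w \in V(C) \setminus \{u\}} |E_G(A_u, A_w)|.
\]
By Step 1, the two summands attached to the neighbors of $u$ in $C$ already contribute $\lambda/2 + \lambda/2 = \lambda$, so every other summand — and in particular $|E_G(A_u, A_v)|$ — must vanish.

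The main obstacle is the bookkeeping in Step 1: one must verify that the min-cut of $\mathcal{K}$ obtained by deleting $e$ and $f$ really has $\mathcal{K}[C,u] \cup \mathcal{K}[C,v] \cup \{g\}$ as one of its two sides, and that $e, f, g$ are genuinely distinct edges of $C$. This is exactly the point at which the hypothesis $|V(C)| \geq 3$ is needed; without it the neighbor case would produce the cut $V(G)$, which is not a min-cut. Everything else is a direct consequence of the partition property and the linearity of edge counts.
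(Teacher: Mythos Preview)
Your proof is correct and essentially the same as the paper's: both use that the sets $A_w$ partition $V(G)$ into min-cuts, deduce $|E_G(A_u,A_v)|=\lambda/2$ for neighbors from the fact that $A_u\cup A_v$ (equivalently its complement $X_3$ in the paper) is also a min-cut, and then handle the non-neighbor case by observing that the two neighbor contributions already exhaust $d_G(A_u)=\lambda$. The only difference is cosmetic---you invoke the identity $d(A)+d(B)=d(A\cup B)+2|E(A,B)|$, whereas the paper writes the same relations as a small linear system in the pairwise edge counts.
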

	\begin{proof}
		Suppose $u$ and $v$ are neighbors in $C$. Let $X_1 := \varphi^{-1}(\mathcal{K}[C, u])$, $X_2 := \varphi^{-1}(\mathcal{K}[C, v])$ and $X_3 := V - X_1 - X_2$. As $(\mathcal{K}, \varphi)$ is a cactus representation, $X_1$, $X_2$ and $X_3$ are min-cuts in $G$, respectively. This implies that $d(X_1, X_2) + d(X_1, X_3) = d(X_2, X_3) + d(X_2, X_1) = d(X_3, X_1) + d(X_3, X_2) = \lambda$. Hence, $d(X_1, X_2) = \lambda/2$.
		
		If $u$ and $v$ are not adjacent in $C$, then there exist vertices $w_1, w_2 \neq v$ adjacent to $u$ in $C$. Let $X_1 := \varphi^{-1}(\mathcal{K}[C, u])$, $X_2 := \varphi^{-1}(\mathcal{K}[C, v])$ and $X_{i + 2} := \varphi^{-1}(\mathcal{K}[C, w_i])$ for $i = 1, 2$. By the first case, we have that $d(X_1, X_3) = d(X_1, X_4) = \lambda/2$. Note that $\lambda = d(X_1) \geq \sum_{i = 2, 3, 4} d(X_1, X_i) = \lambda + d(X_1, X_2)$. Hence we have $d(X_1, X_2) = 0$.
	\end{proof}

	\begin{lemma} \label{lem:no1junctionneighbors}
		Suppose that $\delta(G) \geq 3$. Then no cycle in $\mathcal{K}$ contains two adjacent 1-junction singletons. In particular, at most half of the vertices of every cycle in $\mathcal{K}$ are 1-junction singletons.
	\end{lemma}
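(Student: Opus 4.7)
The plan is to argue by contradiction from two hypothetically adjacent 1-junction singletons $u,v$ on a cycle $C$ of $\mathcal{K}$, and deduce an edge between their preimages whose multiplicity exceeds what a simple graph allows.

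First I would unpack the definitions on $u$ and $v$. Being a 1-junction means $u$ is contained only in cycle $C$, so if $e,f$ are the two edges of $C$ incident to $u$, then $u$ becomes isolated in $\mathcal{K}-e-f$ and hence $\mathcal{K}[C,u]=\{u\}$; by the same argument $\mathcal{K}[C,v]=\{v\}$. Being singletons gives $\varphi^{-1}(u)=\{u'\}$ and $\varphi^{-1}(v)=\{v'\}$ for some $u',v'\in V(G)$. Since $(\mathcal{K},\varphi)$ is a cactus representation and $\{u\}$ is a min-cut in $\mathcal{K}$ (cut out by removing $e$ and $f$), the preimage $\{u'\}$ is a min-cut of $G$, so $\lambda=d_G(u')\ge\delta(G)\ge 3$.

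Now I split on the length of $C$. If $C$ has length at least three, Lemma~\ref{lem:edgesbetweenneighbors} applies directly and yields $d_G(\{u'\},\{v'\})=\lambda/2\ge 3/2$, i.e.\ at least two parallel edges between $u'$ and $v'$, contradicting that $G$ is simple. If $C$ has length two, then since neither $u$ nor $v$ is a cut-vertex of $\mathcal{K}$ (each lying in the single cycle $C$), the cactus $\mathcal{K}$ must consist of $C$ alone, so $V(G)=\varphi^{-1}(u)\cup\varphi^{-1}(v)=\{u',v'\}$ has only two vertices, again contradicting $\delta(G)\ge 3$. This establishes the first statement.

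For the ``in particular'' part, the 1-junction singletons on any cycle $C$ of $\mathcal{K}$ form an independent set in the graph underlying $C$, and the largest independent set in a cycle of length $k$ has size $\lfloor k/2\rfloor\le k/2$, so at most half of the vertices of $C$ are 1-junction singletons. I do not expect a real obstacle here; the only subtle point is the degenerate $2$-cycle case above, where one must use that a $2$-cycle whose both vertices are 1-junctions forces $|V(\mathcal{K})|=2$ and thereby constrains $G$ itself.
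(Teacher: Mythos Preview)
Your proof is correct and follows essentially the same route as the paper: both argue by contradiction, observe that a 1-junction singleton forces $\lambda\ge\delta\ge 3$, then split on whether $C$ has length~$2$ (forcing $|V(G)|=2$) or length $\ge 3$ (where Lemma~\ref{lem:edgesbetweenneighbors} yields $\lambda/2>1$ parallel edges). Your handling of the $2$-cycle case via cut-vertices and your explicit independent-set argument for the ``in particular'' clause are slightly more detailed than the paper's version, but the substance is identical.
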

	\begin{proof}
		Assume to the contrary that $\{v\} \subseteq V$ and $\{w\} \subseteq V$ are the preimages of two adjacent 1-junction singletons of a $k$-cycle in $\mathcal{K}$ (for some $k \geq 2$). In particular, $v$ has degree $\lambda$ in $G$, as it is the preimage of a singleton. This implies $\lambda = \delta \geq 3$, as there is no 1-junction singleton when $\lambda < \delta$.
		
		If $k = 2$, $\mathcal{K}$ and thus also $G$ contains exactly two vertices, which contradicts $\lambda \geq 3$, as $G$ is simple. Otherwise, $k \geq 3$ and $G$ contains exactly $\lambda/2 > 1$ parallel edges between $v$ and $w$ by Lemma~\ref{lem:edgesbetweenneighbors}, which contradicts that $G$ is simple.
	\end{proof}
	
	The following lemma holds trivially from the definition of cactus representation.
	
	\begin{lemma} \label{lem:atleastone}
		$|\varphi^{-1}(\mathcal{K}[C, v])| \geq 1$ for any cycle $C$ in $\mathcal{K}$ and any vertex $v$ in $C$.
	\end{lemma}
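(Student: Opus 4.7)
The plan is to observe that $\mathcal{K}[C,v]$ is itself a min-cut of the cactus $\mathcal{K}$, and then to invoke condition (ii) of the definition of a cactus representation to conclude that its $\varphi$-preimage is a min-cut of $G$, hence non-empty.

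In detail, I would first argue that the edge cut of $\mathcal{K}$ separating $\mathcal{K}[C,v]$ from its complement consists of exactly the two edges $e$ and $f$ of $C$ incident to $v$: by definition $\mathcal{K}[C,v]$ is a connected component of $\mathcal{K} - e - f$, so every edge of $\mathcal{K}$ between $\mathcal{K}[C,v]$ and its complement must be one of $e,f$, and conversely $e,f$ both cross the cut since their other endpoints lie in a different component of $\mathcal{K} - e - f$ (every edge of a cactus, being in a unique cycle, is a bridge after removing one other edge of that cycle). Thus $d_\mathcal{K}(\mathcal{K}[C,v]) = 2$, which is the min-cut value in any cactus (as a cactus is $2$-edge-connected). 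Therefore $\mathcal{K}[C,v]$ is a min-cut of $\mathcal{K}$.

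Next, by clause (ii) of the definition of a cactus representation, the preimage $\varphi^{-1}(\mathcal{K}[C,v])$ must itself be a min-cut of $G$. Since the paper defines a cut as a \emph{non-empty} proper subset $\emptyset \neq X \subset V$, this immediately yields $|\varphi^{-1}(\mathcal{K}[C,v])| \geq 1$, as required.

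There is no real obstacle here; the only thing to check carefully is the first step, namely that $\mathcal{K}[C,v]$ genuinely induces an edge cut of size $2$ in $\mathcal{K}$ (so that it qualifies as a min-cut of the cactus), which is immediate from the structure of cacti. The rest is a direct appeal to the definition.
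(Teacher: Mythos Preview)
Your argument is correct and is exactly the ``trivial from the definition'' reasoning the paper has in mind: $\mathcal{K}[C,v]$ is a min-cut of the cactus, so by clause~(ii) its preimage is a min-cut of $G$ and in particular non-empty. You have simply spelled out the details that the paper leaves implicit.
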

	
	\subsection{Proofs of Theorems~\ref{thm:bb} and~\ref{thm:main}}
	
	Now let $(\mathcal{K},\varphi)$ be a cactus representation of $G$ for the set $\mathcal{C}(G)$ of all min-cuts of $G$ and consider the following four simple modifications. Note that when an edge $vw$ of the cactus $\mathcal{K}$ is contracted into a new vertex $v'$, the mapping $\varphi$ will be revised accordingly, namely all vertices in $\varphi^{-1}(v)$ and $\varphi^{-1}(w)$ will be mapped to $v'$.
	
	\begin{enumerate}[label=(\roman*)] %[(i)]
		\item For any 2-cycle that contains a 1-junction singleton $v$ and another vertex $a$, contract $va$.
		\item For any 3-cycle that contains a 1-junction singleton $v$ and two other vertices $a$ and $b$, delete the edge $ab$ and add the edges $av$ and $bv$.
		\item For any 2-cycle that contains an empty 2-junction $v$ and another vertex $a$, contract $va$.
		\item For any 3-cycle that contains two empty 2-junctions $v, w$, contract $vw$.
	\end{enumerate}
	
\begin{figure}[h!t]
	\centering
	\subfloat[A graph $G$ (solid lines) and a cactus representation for $\mathcal{C}(G)$ (dotted lines). Modification~(i) is applicable for $v$ and $a$, and Modification~(ii) for the singleton $w$, $a$ and $b$.]{%
		\includegraphics[scale=0.7]{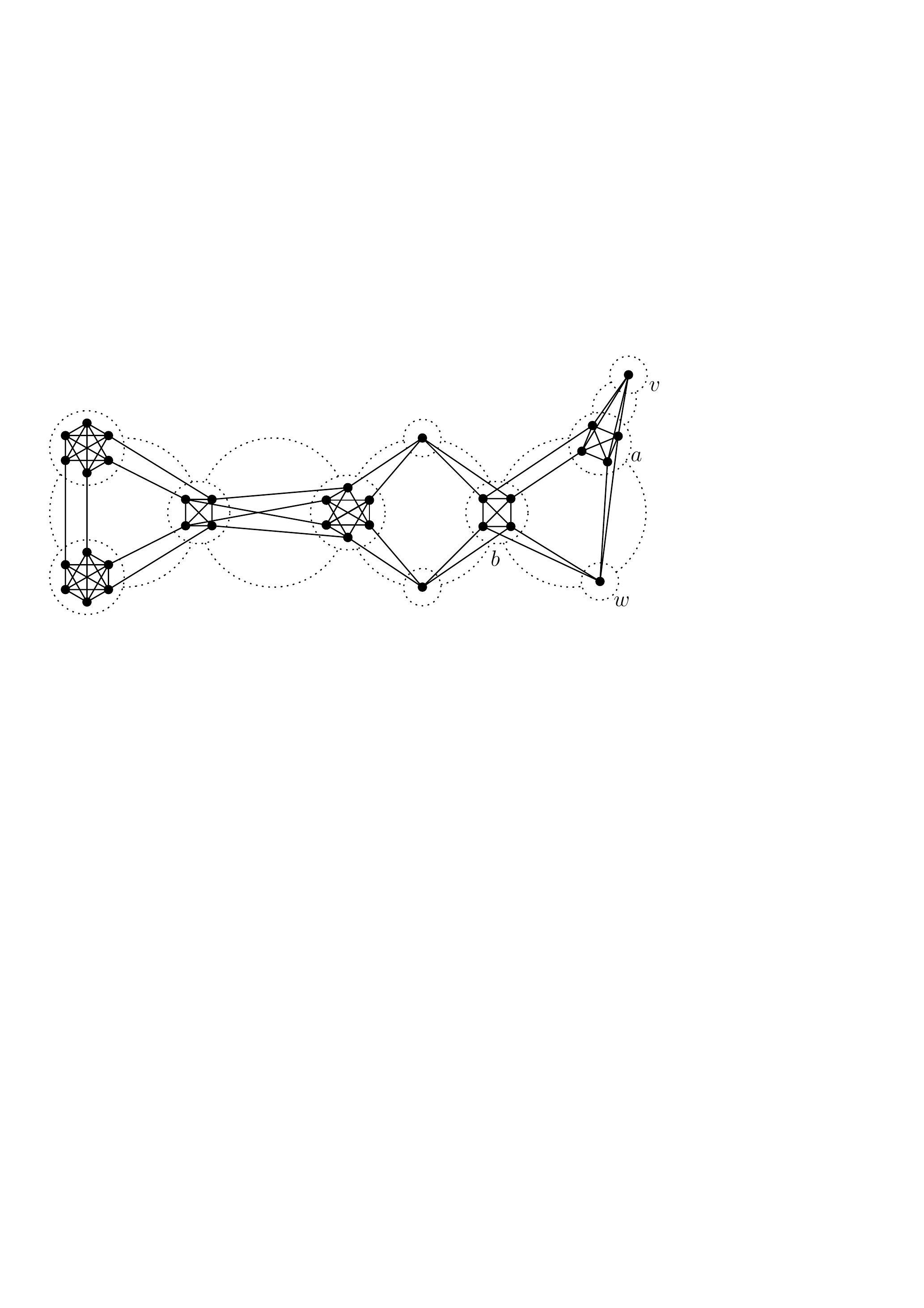}
		\label{fig:example0}
	}
	\\
	\subfloat[The cactus representation $(\mathcal{K'}, \varphi')$ for $\mathcal{NC}(G)$ after applying Modifications~(i) and~(ii) as long as possible.]{
		\includegraphics[scale=0.7]{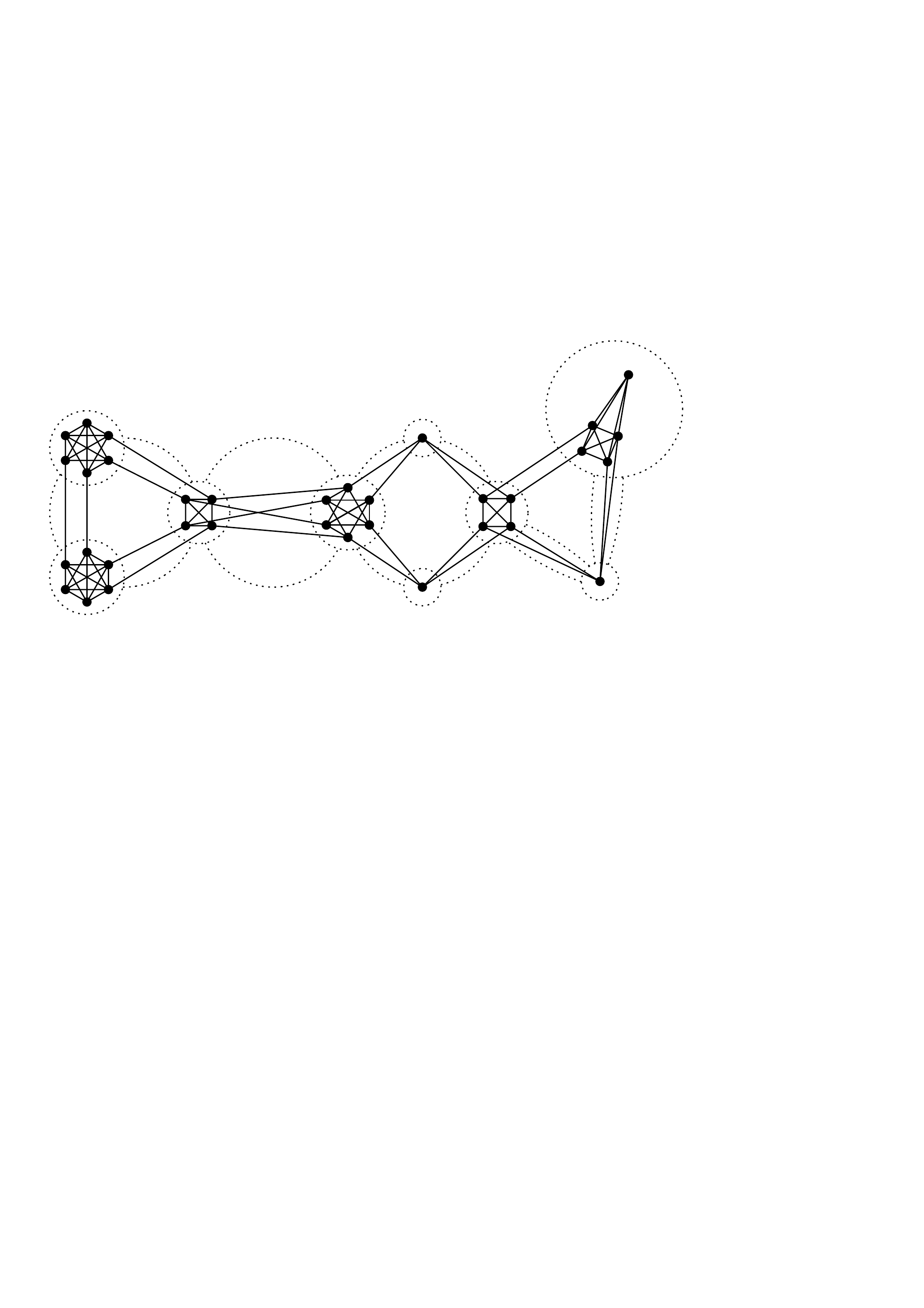}
		\label{fig:example1}
	}
	\\
	\subfloat[The xylem $\mathcal{X}$ of $(\mathcal{K'}, \varphi')$, in which circle vertices depict the center vertices. The vertices $l_1$ and $l_2$ are deleted in order to obtain $\mathcal{X'}$.]{
		\includegraphics[scale=0.7]{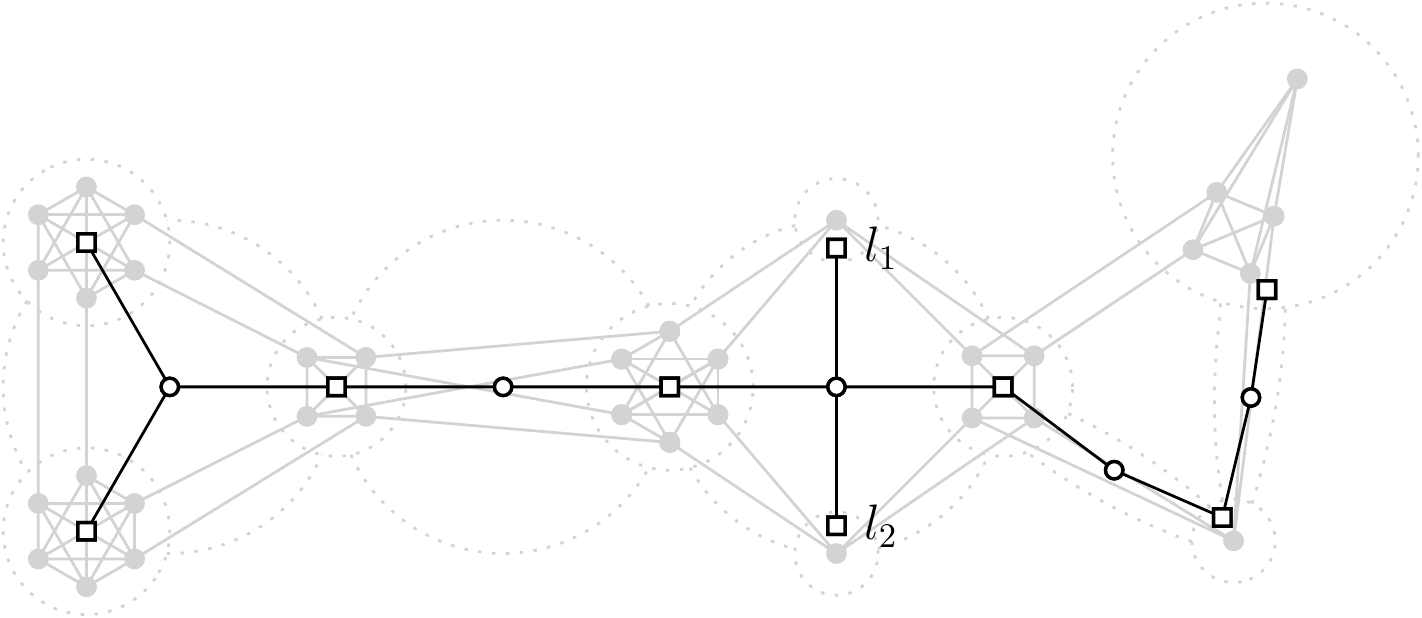}
		\label{fig:example2}
	}
	\caption{A graph $G$ satisfying $\lambda = \delta = 4$ and cactus representations of $G$.}
	\label{fig:example}
\end{figure}
	
	By applying these modifications iteratively to $(\mathcal{K},\varphi)$ as long as possible, we obtain a cactus representation $(\mathcal{K}',\varphi')$ (see Figure~\ref{fig:example}). Then $(\mathcal{K}',\varphi')$ is a cactus representation for the set of all non-trivial min-cuts $\mathcal{NC}(G)$, as every application of Modification~(i) and~(ii) destroys only a trivial min-cut $\{v\}$ (in particular,~(ii) preserves the possibly non-trivial min-cuts that separate $a$ and $b$) and that of Modifications~(iii) and~(iv) do not destroy any min-cut. Thus, $(\mathcal{K'}, \varphi')$ represents a set of min-cuts $\mathcal{R}$ such that $\mathcal{NC}(G) \subseteq \mathcal{R} \subseteq \mathcal{C}(G)$.
	
	Note that, by Lemma~\ref{lem:atleastone} and the exhaustive application of Modification~(i), if $|\varphi^{-1}(\mathcal{K}'[C, v])| = 1$ for some cycle $C$ in $\mathcal{K}'$ and some vertex $v$ in $C$, then $v$ must be a 1-junction singleton.
	
	We claim that the cactus representation $(\mathcal{K}',\varphi')$ for $\mathcal{NC}(G)$ is minimal. Suppose not, and let $vw$ be an edge in a cycle $C$ of $\mathcal{K}'$ such that we can get a smaller cactus representation by contracting $vw$. If $C$ is of length greater than 3, then there is another edge $st$ of $C$ such that $vw$ and $st$ share no common endvertex and there is a min-cut $X$ separating $v, s$ from $w, t$ in $\mathcal{K}'$. By Lemma~\ref{lem:atleastone}, $\varphi^{-1}(X)$ is a non-trivial cut of $G$, and this cut will not preserved if we contract the edge $vw$. Thus we can assume $C$ is of length either 2 or 3. If $C$ is of length 3, then $|\varphi^{-1}(\mathcal{K}'[C, v])| > 1$; otherwise $v$ is a 1-junction singleton, contradicting the exhaustive application of Modification~(ii). If $v$ is non-empty or empty but not a 2-junction, then the non-trivial min-cut $\varphi^{-1}(\mathcal{K}'[C, v])$ of $G$ cannot be preserved if we contract $vw$. Therefore $v$ and $w$ must be empty 2-junctions, which is however not possible because of Modification~(iv). If $C$ is of length 2, then, similarly, one of $v, w$ must be an empty 2-junction; this possibility has been ruled out by Modification~(iii). Hence the claim is proved.
	
	Since every application of Modification~(i),~(iii) and~(iv) decreases the number of cactus vertices by 1, and every application of Modification~(ii) decreases the number of 3-cycles in the cactus by 1, $(\mathcal{K}',\varphi')$ can be computed in a running time that is linear in the size of $\mathcal{K}$. We remark that we do not need the full strength of the minimality; Modification~(iv) is actually not needed to prove our results.
	
	Kawarabayashi and Thorup~\cite[Theorem~8.2]{Kawarabayashi2018} showed that a standard cactus representation can be found in $\tilde{O}(m)$ time. We conclude that $(\mathcal{K}',\varphi')$ can be computed in $\tilde{O}(m)$ time.
	
	In the next section we will prove the following key lemma.
	
	\begin{lemma}\label{lem:key}
		$\mathcal{K}'$ has $O(n/\delta)$ vertices.
	\end{lemma}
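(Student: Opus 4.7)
The plan is to bound $N := |V(\mathcal{K}')|$ by partitioning the cactus vertices according to their junction-index and preimage size, and then charging each part to $\Omega(\delta)$ vertices of $G$. Split $V(\mathcal{K}')$ into $V_A$, the $1$-junction singletons; $V_B$, the $1$-junction non-singletons; $V_C$, the non-empty $\geq 2$-junctions; and $V_D$, the empty $\geq 2$-junctions. The easy estimate is $|V_B| \leq n/\delta$: for $v \in V_B$, the preimage $\varphi'^{-1}(v)$ is a non-trivial min-cut, and summing $G$-degrees inside it in the simple graph $G$ yields $\delta|\varphi'^{-1}(v)| \leq 2\binom{|\varphi'^{-1}(v)|}{2} + \lambda$, which forces $|\varphi'^{-1}(v)| \geq \delta$ since $\lambda \leq \delta$.

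Next I bound $|V_A|$ using Lemma~\ref{lem:no1junctionneighbors} together with Modifications~(i) and~(ii): these imply that $V_A$-vertices lie only in cycles of length at least $4$ and are pairwise non-adjacent in such cycles. Traversing each cycle cyclically yields an injection from $V_A \cap C$ into the non-$V_A$ vertices of $C$, where each non-$V_A$ vertex absorbs at most one image per cycle it belongs to. Summing across cycles and using the block-tree identity $\sum_{v \in V_{\geq 2}} k_v = c + |V_{\geq 2}| - 1$ (with $c$ denoting the number of cycles in $\mathcal{K}'$) gives $|V_A| \leq |V_B| + c + |V_{\geq 2}| - 1$, and therefore $N \leq 2|V_B| + c + 2|V_{\geq 2}| - 1$. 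Hence it suffices to establish $c + |V_{\geq 2}| = O(n/\delta)$.

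To bound $c + |V_{\geq 2}|$ I follow the \emph{xylem} construction suggested by Figure~\ref{fig:example2}: build a tree $\mathcal{X}$ from $\mathcal{K}'$ by adding a \emph{center vertex} for each cycle of length at least $3$ (joined to all vertices of the cycle) and replacing every $2$-cycle by a single edge. The leaves of $\mathcal{X}$ are precisely the $1$-junction vertices of $\mathcal{K}'$, so deleting the leaves in $V_A$ produces a subtree $\mathcal{X}'$ whose leaves all lie in $V_B$. The standard tree identity (number of leaves $= 2 + \sum_{\text{internal}}(\deg - 2)$) bounds the internal vertices of $\mathcal{X}'$ of degree at least $3$ by $|V_B| - 2 = O(n/\delta)$; the remaining task is to control the degree-$2$ internal vertices, which come in two flavours---$2$-junctions, and centers of $4$-cycles containing two $V_A$-vertices. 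The main obstacle is the empty $2$-junctions in $V_D$, since they carry no $G$-mass on their own; here Modifications~(iii) and~(iv) become essential, as (iii) forces every empty $2$-junction to be sandwiched between two cycles of length at least $3$, while (iv) forbids two such vertices from being cycle-neighbors in a $3$-cycle. These constraints let me show that along any maximal degree-$2$ chain in $\mathcal{X}'$ a constant fraction of the cycles or junctions must be ``heavy''---either containing a $V_B$-vertex (worth $\geq \delta$ $G$-mass) or a $V_C$-vertex (worth $\geq 1$). Amortizing the chain length against the budgets $|V_B| \leq n/\delta$ and $\sum_{v \in V_C}|\varphi'^{-1}(v)| \leq n$ then gives $c + |V_{\geq 2}| = O(n/\delta)$, completing the argument.
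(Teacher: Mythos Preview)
Your argument is solid up to the point where you need to bound the degree-$2$ vertices of $\mathcal{X}'$, but the final amortization does not go through as written.

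The $V_C$-budget you invoke, $\sum_{v\in V_C}|\varphi'^{-1}(v)|\le n$ with each summand $\ge 1$, yields only $|V_C|\le n$, not $|V_C|=O(n/\delta)$; so charging segments of a lean chain to ``$V_C$-vertices worth $\ge 1$'' cannot produce an $O(n/\delta)$ bound. Worse, your combinatorial claim that a constant fraction of the chain is ``heavy'' is not a consequence of Modifications~(i)--(iv) alone. Consider a chain of $4$-cycles in $\mathcal{K}'$ glued along \emph{empty} $2$-junctions, where in each $4$-cycle the two remaining vertices are $1$-junction singletons (i.e.\ lie in $V_A$). This configuration satisfies all four modifications, yet along the resulting degree-$2$ chain in $\mathcal{X}'$ every cactus vertex is in $V_D$ and every $4$-cycle contains only $V_A$-vertices besides its two junctions: there are no $V_B$- or $V_C$-vertices at all. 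Hence neither of your two budgets gets charged.

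What actually rules out long chains of this shape is a quantitative consequence of simplicity that you never use: if $W\subseteq V(G)$ is separated from its complement by at most two min-cuts, then $|W|(\delta-|W|+1)\le 2\lambda\le 2\delta$, so once $|W|\ge 3$ and $\delta>6$ one is forced to $|W|\ge \delta-1$. The paper applies exactly this: for any length-$6$ path $c_1,v_2,c_2,v_3,c_3,v_4,c_4$ of degree-$2$ internal vertices in $\mathcal{X}'$ (paper's xylem), the set $W=\varphi'^{-1}(V(C_2)\cup V(C_3))$ is cut off by two min-cuts, and Modifications~(ii)--(iii) guarantee $|W|\ge 3$, whence $|W|=\Omega(\delta)$. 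Chopping each maximal lean path into pieces of length five then charges each piece to $\Omega(\delta)$ distinct vertices of $G$, giving $O(n/\delta)$ degree-$2$ vertices. Your proof sketch needs this (or an equivalent) step; Modifications~(iii) and~(iv) by themselves do not suffice.
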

	
	Theorem~\ref{thm:bb} follows directly from Lemma~\ref{lem:key}, as we have already observed that  $\mathcal{K}'$ can be computed in near-linear time.
	
	Assuming Lemma~\ref{lem:key}, we can deduce Theorem~\ref{thm:main} as follows. When we contract $\varphi'^{-1}(v)$ in $G$ for each cactus vertex $v$ of $\mathcal{K'}$, we obtain a graph with $|V(\mathcal{K}')| = O(n/\delta)$ vertices. The graph has at most $\lambda (|V(\mathcal{K}')| - 1) = O(n)$ edges by the definition of cactus representations, as the edges in $\mathcal{K'}$ can be covered by $|V(\mathcal{K}')| - 1$ uncrossing min-cuts (each of size $2$) and, consequently, the edges in $G$ that are not contracted can be covered by $|V(\mathcal{K}')| - 1$ min-cuts (each of size $\lambda$). Since $(\mathcal{K}', \varphi')$ is a cactus representation for $\mathcal{NC}(G)$, all non-trivial min-cuts are preserved after the contractions.
	
	This completes the proof of Theorem~\ref{thm:main}, and it remains to prove Lemma~\ref{lem:key}.

	\subsection{Proof of Lemma~\ref{lem:key}}
	We may assume that $\delta>6$, for otherwise, there is nothing to prove. In order to count the number of vertices of $\mathcal{K}'$, we consider a tree that reflects the cactus structure in a natural way.
	
	Given a cactus, a \emph{xylem} of the cactus is a tree that contains all cactus vertices plus one additional \emph{center} vertex for every cycle in the cactus, and has an edge between two vertices $v$ and $c$ if and only if $v$ is a cactus vertex and $c$ a center vertex such that $v$ is contained in the cycle represented by $c$ (see Figure~\ref{fig:example}). Note that the original edges of the cactus are not part of the xylem and that the xylem is indeed a tree, because the blocks of every cactus are its cycles and the blocks of every connected graph form a tree structure (the so-called \emph{block tree}). Since all cactus cycles have length at least two, the leaves of the xylem are exactly the 1-junction vertices of the cactus, i.e. the vertices of the cactus that appear in only one cycle. In order to prove the claim, we will prove the slightly stronger statement that the xylem $\mathcal{X}$ of $\mathcal{K}'$ has $O(n/\delta)$ vertices.
	
	Let $\mathcal{X'}$ be the tree obtained from $\mathcal{X}$ by deleting every vertex that is a 1-junction singleton in $\mathcal{K'}$. Note that the 1-junction non-singletons remain in $\mathcal{X'}$ and they are exactly the leaves of $\mathcal{X'}$. By Lemma~\ref{lem:no1junctionneighbors}, at most half of the vertices of every cycle in $\mathcal{K'}$ are 1-junction singletons. Hence, the leaf pruning reduces the number of vertices by at most factor two, and for the remainder it suffices to prove that $\mathcal{X}'$ has $O(n/\delta)$ vertices.
	
	We next argue that the leaf pruning in $\mathcal{X}$ does not create any new leaves.
	
	\begin{lemma}\label{lem:nocenterleaf}
		The set of leaves in $\mathcal{X'}$ is the set of 1-junction non-singletons in $\mathcal{K'}$.
	\end{lemma}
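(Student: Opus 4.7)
The plan is to verify that the leaf-pruning operation that turns $\mathcal{X}$ into $\mathcal{X}'$ neither destroys the 1-junction non-singleton leaves of $\mathcal{X}$ nor promotes any non-leaf of $\mathcal{X}$ into a leaf of $\mathcal{X}'$. The first part is immediate: 1-junction non-singletons are not deleted, and their unique neighbor in $\mathcal{X}$ is a center vertex, which is never deleted either. Since the leaves of $\mathcal{X}$ are exactly the 1-junction cactus vertices of $\mathcal{K}'$, every surviving non-leaf of $\mathcal{X}$ is either a non-1-junction cactus vertex or a center vertex, and I would handle these two types separately.

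For a non-1-junction cactus vertex $v$, $v$ lies in at least two cycles of $\mathcal{K}'$, so it has at least two center-vertex neighbors in $\mathcal{X}$; since no center vertex is deleted, $v$ retains degree at least two in $\mathcal{X}'$. The main case is a center vertex $c$ corresponding to a cycle $C$ of $\mathcal{K}'$, whose degree in $\mathcal{X}'$ equals the number of vertices of $C$ that are not 1-junction singletons. I would split by the length $|C|$ to show this is always at least two: if $|C|=2$, the exhaustive application of Modification~(i) rules out any 1-junction singleton in a 2-cycle of $\mathcal{K}'$, so $c$ keeps both of its neighbors; if $|C|=3$, the same conclusion is reached via Modification~(ii); and if $|C|\geq 4$, Lemma~\ref{lem:no1junctionneighbors} implies that no two adjacent vertices of $C$ are 1-junction singletons, so at most $\lfloor|C|/2\rfloor$ vertices of $C$ are 1-junction singletons, leaving at least $\lceil|C|/2\rceil\geq 2$ surviving neighbors for $c$ in $\mathcal{X}'$.

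The subtle point on which the argument rests, and which I expect to be the main pitfall, is that the four modifications are applied to exhaustion before $\mathcal{K}'$ is fixed, so $\mathcal{K}'$ itself, rather than some intermediate stage, is guaranteed to contain no 2-cycle or 3-cycle with a 1-junction singleton; intermediate stages may temporarily violate this, but any such violation would retrigger Modification~(i) or~(ii) and therefore must be absent in $\mathcal{K}'$. Granted this, the two cases above together show that every non-deleted vertex of $\mathcal{X}$ other than the 1-junction non-singletons has degree at least two in $\mathcal{X}'$, which completes the proof.
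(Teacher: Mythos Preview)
Your argument is correct and follows essentially the same route as the paper's proof. The paper is slightly terser: it observes that it suffices to check that no center vertex becomes a leaf, and handles $k\geq 3$ in one stroke via Lemma~\ref{lem:no1junctionneighbors} rather than splitting off the $k=3$ case using Modification~(ii), but the underlying reasoning is the same.
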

	\begin{proof}
		It suffices to show that no center vertex of $\mathcal{K'}$ is a leaf in $\mathcal{X'}$. Consider any $k$-cycle $A$ in $\mathcal{K'}$ and let $v$ be the center vertex that corresponds to that cycle. If $k = 2$, $A$ does not contain a 1-junction singleton by Modification~(i), so that $v$ is not a leaf. Otherwise, $k \geq 3$. By Lemma~\ref{lem:no1junctionneighbors}, $A$ contains at least two vertices that are not 1-junction singletons, so that $v$ is not a leaf.
	\end{proof}
	
	Consider any leaf $l$ in $\mathcal{X}'$. By Lemma~\ref{lem:nocenterleaf}, $l$ is a 1-junction vertex in $\mathcal{K}'$ and $|\varphi'^{-1}(l)| > 1$. By the definition of cactus representations, $\varphi'^{-1}(l)$ is a min-cut of $\mathcal{K}$ of size $\lambda \leq \delta$. Since $G$ is simple, this implies $|\varphi'^{-1}(l)| \geq \delta$ by a well-known lemma (see e.g.\ the proof of~\cite[Observation~1.5]{Kawarabayashi2018}).
	
	Thus, $\mathcal{X}'$ contains at most $O(n/\delta)$ leaves, and the fact that $\mathcal{X}'$ is a tree implies in turn that $\mathcal{X}'$ contains at most $O(n/\delta)$ vertices of degree at least three. It remains to show that $\mathcal{X}'$ contains $O(n/\delta)$ vertices of degree two.
	
	To see this, observe that $\mathcal{X'}$ is bipartite and every path is alternating between cactus vertices and center vertices. Consider any path $c_1$, $v_2$, $c_2$, $v_3$, $c_3$, $v_4$, $c_4$ of length six in $\mathcal{X}'$, where $c_i$ for every $i = 1,\dots,4$ is a center vertex and all internal vertices $v_2$, $c_2$, $v_3$, $c_3$, $v_4$ are of degree two in $\mathcal{X}'$. For $i = 1,\dots,4$, let $C_i$ be the cactus cycle in $\mathcal{K}'$ that corresponds to the center vertex $c_i$. For $i = 2,3,4$, $C_{i - 1}$ and $C_i$ are the two only cycles that contain $v_i$ in $\mathcal{K'}$, since $v_i$ has degree two in $\mathcal{X'}$ (and also in $\mathcal{X}$). Our assumption $\delta > 6$ together with Lemma~\ref{lem:no1junctionneighbors} imply that $C_i$ is a $k$-cycle with $2 \leq k \leq 4$ for $i = 2,3$, as $c_i$ is of degree two in $\mathcal{X'}$ and amongst its neighbors in $\mathcal{X}$ there are at most two 1-junction singletons to be deleted when constructing $\mathcal{X'}$ from $\mathcal{X}$. Furthermore, it is actually either a 2-cycle or a 4-cycle, since Modification~(ii) ensures that every 3-cycle in $\mathcal{K'}$ contains no 1-junction singleton.
	
	Let $W$ be the non-empty (by the existence of min-cuts) set of all vertices of $G$ that are mapped to a vertex in $V(C_2) \cup V(C_3)$ by $\varphi'$. Then $W$ is separated from $V-W \neq \emptyset$ by two min-cuts of $G$, one of which is represented by the two edges incident to $v_2$ in $C_1$ while the other is represented by the two edges incident to $v_4$ in $C_4$. Thus, at most $2\lambda$ edges leave $W$ in $G$. As $G$ is simple, we have $|W|(\delta - |W| + 1) \leq 2\lambda \leq 2\delta$. Together with the assumption $\delta > 6$, this implies that if $W$ has at least three vertices, $W$ has $\Omega(\delta)$ vertices.
	
	We next argue that $|W| \geq 3$, so that this $\Omega(\delta)$ lower bound is effective. Consider the cycle $C_i$ ($i = 2, 3$). If $C_i$ is a 2-cycle, then both $v_i$ and $v_{i + 1}$ are non-empty as ensured by Modification~(iii). If $C_i$ is a 4-cycle, then $C_i$ contains two non-empty 1-junction singletons that are different from $v_i$ and $v_{i+1}$. Thus, in every case, $V(C_2) \cup V(C_3)$ contains at least three non-empty cactus vertices, which implies $|W| \geq 3$. We conclude that $|W| \ge \delta - 1 = \Omega(\delta)$.
	
	A path in $\mathcal{X}'$ is called \emph{lean} if every vertex of it has degree two. By the last paragraph, $\mathcal{X}'$ contains $O(n/\delta)$ leaves, $O(n/\delta)$ vertices of degree at least three, and $O(n/\delta)$ maximal lean paths (they altogether partition the vertex set of $\mathcal{X'}$). We cut each maximal lean path into vertex-disjoint lean subpaths of length five, plus a potential remainder of a lean subpath of length less than five. Since every lean path of length five plus one of the two vertices adjacent to its ends form a path of length six as defined above, at least $\Omega(\delta)$ vertices of $G$ are mapped by $\varphi'$ to the vertices of every lean path of length five. Therefore, the number of these paths of length five is $O(n/\delta)$. Since the number of the remainder paths of length less than five is at most that of the maximal lean paths, i.e. $O(n/\delta)$, we conclude that $\mathcal{X}'$ and thus $\mathcal{K}'$ have $O(n/\delta)$ vertices. We remark that with a detailed calculation one can show that $\mathcal{K}'$ has less than $30n / \delta$ vertices.
	
	\subsection{Tightness}\label{sec:tightness}
	
	Let $n \geq 3(\delta+1)$, $\delta \geq 2$, $\lambda \leq \delta/2$ be an even positive integer, and assume that $n$ is a multiple of $\delta+1$ (the last assumption can be avoided by a simple modification of the construction). Set $r := n/(\delta+1)$. Let $G := (\{v_{i,j}: 1\leq i \leq r, 1\leq j \leq \delta+1\}, E_1 \cup E_2)$, where $E_1 := \bigcup_{i=1}^{r}\{v_{i,j} v_{i,k}: 1 \leq j < k \leq \delta + 1\}$ and $E_2 := \bigcup_{j=1}^{\lambda/2} \{v_{i, j} v_{i+1, j}: 1 \leq i \leq r\}$ (we set $v_{r+1, j} := v_{1, j}$ for all $1 \leq j \leq \delta + 1$). That is, $G$ is $r$ copies of cliques $K_{\delta+1}$ linked by $\lambda/2$ vertex-disjoint cycles of length $r$, with minimum degree $\delta$ and edge-connectivity $\lambda$. It is clear that $G$ shows tightness of Theorem~\ref{thm:main} and Corollary~\ref{cor:maincor}.
	 
	The assumption of connectedness in Corollary~\ref{cor:maincor} is (not only technically) necessary, as shown by the graph having $n/(\delta + 1)$ disjoint cliques $K_{\delta + 1}$, which has exponentially many non-trivial min-cuts if we fix $\delta$.
	
	\section{Enumerating all Min-Cuts}
	
	In this section we apply the compact cactus representation to a simple graph, to list all its min-cuts explicitly in $\tilde{O}(m) + O(n^2 / \delta)$ time, which proves Theorem~\ref{thm:enumeration}. This signicifantly improves the previous best time bound $O(nm)$ given by Gusfield and Naor~\cite{Gusfield1993}. Given a graph $G$, it is known that~\cite{Nagamochi1992} a subgraph $H \subseteq G$ with $O(n \delta)$ edges can be obtained in $O(m)$ time such that $H$ preserves all cuts of size not larger than $\delta$ and cuts of size larger than $\delta$ preserve at least $\delta$ of their edges; in particular, all min-cuts are preserved. Therefore we may assume that $m = \Theta(n\delta)$ when only the cuts of size not larger than $\delta$ concern us. From this point of view, we speeds up the approach of Gusfield and Naor by a factor of $\delta^2$.
	
	Intuitively, we may obtain a graph with $O(n / \delta)$ vertices and $O(n)$ edges preserving all non-trivial min-cuts by contracting the vertices of the cactus in our compact cactus representation, then apply the algorithm of Gusfield and Naor~\cite{Gusfield1993} to list all non-trivial min-cuts in $O(n^2 / \delta)$ time, and finally add the missing trivial min-cuts to the list if there are any. However, the graph obtained from contraction may have multiple edges. In this case we can not directly apply the result of Gusfield and Naor. To this end, we propose the following enumeration algorithm for a simple graph $G$.
	
	\begin{enumerate}
		\item Obtain a cactus representation $(\mathcal{K}_1, \varphi_1)$ for $\mathcal{NC}(G)$ in $\tilde{O}(m)$ time as described in Theorem~\ref{thm:bb}. 
	\end{enumerate}
	
	Let $H$ be the multigraph obtained from $G$ by contracting the vertices of $\mathcal{K}_1$. $H$ has $O(n /\delta)$ vertices and $O(n)$ edges, and preserves all non-trivial min-cuts of $G$.
	\begin{enumerate}[resume]
		\item Set $\mathcal{D}_1 := \emptyset$. For every 2-cycle $C$ in $\mathcal{K}_1$ with $V(C) = \{u, v\}$, we set $\mathcal{D}_1 := \mathcal{D}_1 \cup \{\mathcal{A}_C\}$, where $\mathcal{A}_C$ is the DAG with two vertices $\mathcal{K}_1[C, u], \mathcal{K}_1[C, v]$ and two edges both directed from $\mathcal{K}_1[C, u]$ to $\mathcal{K}_1[C, v]$. Contract all 2-cycles in $\mathcal{K}_1$ to obtain another cactus representation $(\mathcal{K}_2, \varphi_2)$. $\mathcal{D}_1$ and $(\mathcal{K}_2, \varphi_2)$ can be constructed in $O(n^2 / \delta)$ time as there are $O(n / \delta)$ 2-cycles in $\mathcal{K}_1$.
	\end{enumerate}
	
	It is clear that $\mathcal{A}_C$ has exactly one closed set which represents the min-cut $\mathcal{K}_1[C, v]$ in $\mathcal{K}_1$ and the min-cut $\varphi_1^{-1}(\mathcal{K}_1[C, v])$ in $G$. All other min-cuts represented by $(\mathcal{K}_1, \varphi_1)$ preserve if we contract $C$ in $\mathcal{K}_1$. Note that the cactus $\mathcal{K}_2$ is simple. 
	\begin{enumerate}[resume]
		\item Apply the method of Gusfield and Naor~\cite[Section 3.1]{Gusfield1993} to the cactus $\mathcal{K}_2$, to obtain a family $\mathcal{D}_2$ of $O(n / \delta)$ DAGs in $O(n^2 / \delta^2)$ time, such that every min-cut of $\mathcal{K}_2$ is exactly once represented by a closed set of some DAG from $\mathcal{D}_2$. Each of these DAGs $\mathcal{A}$ is associated with two distinct vertices $s_\mathcal{A}, t_\mathcal{A} \in V(\mathcal{K}_2)$ in the following way. Replace every edge $uv$ in $\mathcal{K}_2$ with directed edges $uv, vu$ orientated in opposite directions, find a maximum $s_\mathcal{A}$-$t_\mathcal{A}$-flow and construct the residual graph (which is called augmentation graph in~\cite{Gusfield1993}), finally contract all strongly connected components in the residual graph to obtain a DAG $\mathcal{A}$. 
	\end{enumerate}
	
	As $\mathcal{K}_2$ is a cactus graph, every $(\mathcal{A}, \rho) \in \mathcal{D}_2$ associated with $s_\mathcal{A}, t_\mathcal{A}$ can be derived in the following equivalent way. Let $\mathcal{K}_2^{(\mathcal{A})}$ be the minimal cactus subgraph of $\mathcal{K}_2$ containing $s_\mathcal{A}, t_\mathcal{A}$. It is not hard to see that $\mathcal{K}_2^{(\mathcal{A})}$ consists of cycles $C_1, C_2, \dots, C_k$ such that $s_\mathcal{A} \in V(C_1)$, $t_\mathcal{A} \in V(C_k)$, $s_\mathcal{A}$ and $t_\mathcal{A}$ are 1-junction vertices in $\mathcal{K}_2^{(\mathcal{A})}$, and $C_i$ intersects $C_{i + 1}$ with exactly one vertex for every $i = 1, \dots, k - 1$. The vertices of $\mathcal{K}_2^{(\mathcal{A})}$ have to be revised so that they form a partition of $V(\mathcal{K}_2)$. For every 1-junction vertex $v$ in $\mathcal{K}_2^{(\mathcal{A})}$, let $C$ be the cycle containing $v$, we set $v := \mathcal{K}_2[C, v]$ (i.e. $\rho$ maps every vertex in $\mathcal{K}_2[C, v]$ to $v$); for every 2-junction vertex $v$ in $\mathcal{K}_2^{(\mathcal{A})}$, let $C, C'$ be the cycles containing $v$, we set $v := \mathcal{K}_2[C, v] \cap \mathcal{K}_2[C', v]$. $\mathcal{A}$ is obtained by orientating the edges of $\mathcal{K}_2^{(\mathcal{A})}$ so that there are two edge-disjoint paths directed from $t_\mathcal{A}$ to $s_\mathcal{A}$, and $\mathcal{A}$ is exactly the union of these two paths. For ease of presentation, we see $\rho$ as a mapping from $V(\mathcal{K}_1)$ (instead of $V(\mathcal{K}_2)$) to $V(\mathcal{A})$ for every $(\mathcal{A}, \rho) \in \mathcal{D}_2$.
	
	We now have a collection $\mathcal{D} := \mathcal{D}_1 \cup \mathcal{D}_2$ of $O(n / \delta)$ DAGs such that their closed sets represent the min-cuts of $\mathcal{K}_1$ and this representation is a one-to-one correspondence. In order to let them represent the min-cuts of $G$ represented by $(\mathcal{K}_1, \varphi_1)$, we have to put back the edges of $H$ with orientation into the DAGs. 
	
	We claim that for every $(\mathcal{A}, \rho) \in \mathcal{D}$ and $uv \in E(H)$, if $\rho(\varphi_1(u)) \neq \rho(\varphi_1(v))$, then $\rho(\varphi_1(u))$ is either a predecessor or a successor of $\rho(\varphi_1(v))$ in $\mathcal{A}$. Suppose not, then there is a cycle $C$ in the underlying undirected graph of $\mathcal{A}$ such that $\rho(\varphi_1(u)), \rho(\varphi_1(v)) \in V(C)$ but they are not adjacent to each other. It implies that there is a cycle $C'$ in $\mathcal{K}_1$ such that $C'$ contains two non-adjacent vertices $u', v'$ satisfying $\varphi_1(u) \in \mathcal{K}_1[C', u']$, $\varphi_1(v) \in \mathcal{K}_1[C', v']$ and $d_G (\varphi_1^{-1}(\mathcal{K}_1[C', u']), \varphi_1^{-1}(\mathcal{K}_1[C', v'])) = d_H (\varphi_1^{-1}(\mathcal{K}_1[C', u']), \varphi_1^{-1}(\mathcal{K}_1[C', v'])) > 0$, which contradicts Lemma~\ref{lem:edgesbetweenneighbors}.
	
	\begin{enumerate}[resume]
		\item We aim for a collection $\mathcal{D}'$ of DAGs such that the closed sets of the DAGs from this collection represent the min-cuts of $G$. For every $(\mathcal{A}, \rho) \in \mathcal{D}$, we construct a DAG $\mathcal{A}'$ on the same vertex set $V(\mathcal{A})$ as follows. We first order the vertices of $\mathcal{A}$ linearly as $v_1 = t_\mathcal{A}, v_2, \dots, v_a = s_\mathcal{A}$ such that for any $1 \le i < j \le a$ we have either that $v_i$ is neither a predecessor nor a successor of $v_j$, or that $v_i$ is a predecessor of $v_j$ in $\mathcal{A}$. This can be readily done in $O(n / \delta)$ time by a topological sort. Set $V(\mathcal{A}') := V(\mathcal{A})$ and $E(\mathcal{A}') := \emptyset$. For every $uv \in E(H)$ with $\rho(\varphi_1(u)) \neq \rho(\varphi_1(v))$, say $\rho(\varphi_1(u)) = u_i$ and $\rho(\varphi_1(v)) = u_j$ for some $1 \le i < j \le a$, set $E(\mathcal{A}') := E(\mathcal{A}')  \cup \{u_i u_j\}$. Here we keep the multiple edges; more precisely, we identify the directed edge $u_i u_j \in E(\mathcal{A}')$ with the edge $uv \in E(H)$. It takes $O(n)$ time per DAG, and hence $\mathcal{D}' := \{\mathcal{A}' : \mathcal{A} \in \mathcal{D}\}$ can be constructed in $O(n^2 / \delta)$ time.
	\end{enumerate}
	
	We claim that for every $\emptyset \neq X \subset V(\mathcal{A}) = V(\mathcal{A}')$, $X$ is a closed set of $\mathcal{A}$ if and only if it is a closed set of $\mathcal{A}'$. It suffices to show that $u$ is a predecessor of $v$ in $\mathcal{A}$ if and only if $u$ is a predecessor of $v$ in $\mathcal{A}'$, equivalently, $u$ is a predecessor of $v$ in $\mathcal{A}$ for every $uv \in E(\mathcal{A}')$, and so in $\mathcal{A}'$ for every $uv \in E(\mathcal{A})$. That $u$ is a predecessor of $v$ in $\mathcal{A}$ for every $uv \in E(\mathcal{A}')$ follows immediately from the previous claim and the construction in Step~4. It is left to show that for every $uv \in E(\mathcal{A})$, $u$ is a predecessor of $v$ in $\mathcal{A}'$. Let $uv \in E(\mathcal{A})$, and $\mathcal{U}$ be the underlying undirected graph of $\mathcal{A}$. If both $u, v$ are 1-junction vertices in $\mathcal{U}$, then, by Lemma~\ref{lem:edgesbetweenneighbors}, exists a undirected edge $u' v' \in E(H)$ such that $\rho(\varphi_1(u')) = u$ and $\rho(\varphi_1(v')) = v$, and hence there exists an edge directed from $u$ to $v$ in $\mathcal{A}'$. If $u$ is 1-junction and $v$ is 2-junction in $\mathcal{U}$, let $C, C'$ be the cycles of $\mathcal{U}$ containing $v$, say $u$ is in $C$. Denote $X := \varphi_1^{-1}(\rho^{-1}(\{u\}))$, $Y := \varphi_1^{-1}(\rho^{-1}(\{v\}))$, $Z := \varphi_1^{-1}(\rho^{-1}(\mathcal{U}[C', v])) - X - Y$ and $W := \varphi_1^{-1}(\rho^{-1}(\mathcal{U}[C, v])) - Y$, i.e. $X, Y, Z$ and $W$ form a partition of $V(G)$. We recall that $\mathcal{K}_2 \supseteq \mathcal{U}$ is a simple graph, in particular, $|Z| \geq 1$ and $|W| > 1$. Indeed, the possibility that $|Z| = 1$ is ruled out by Modifications~(i) and~(ii), as $\mathcal{U}$ can be obtained from $\mathcal{K}_1$ by cycle contractions. Thus we have $|Z| > 1$. Suppose $d_G(X, Y) = 0$. By Lemma~\ref{lem:edgesbetweenneighbors}, we have $d_G(X, W) = d_G(X, Y \cup W) = \lambda / 2$, and hence $d_G(X, Y \cup Z) = d_G(X, Z) = \lambda / 2$. As $d_G(X \cup Y \cup Z) = \lambda$, we have $d_G(Y \cup Z) = d_G(X \cup Y \cup Z) - d_G(X, W) + d_G(X, Y \cup Z) = \lambda$. It implies that $\mathcal{U}$ has a cut of size 4 (the two edges incident to $u$ in $C$ and the two edges incident to $v$ in $C'$) which represents a non-trivial min-cut of $G$. But this non-trivial min-cut is not represented by any min-cut of $\mathcal{K}_1$, which contradicts that $(\mathcal{K}_1, \varphi_1)$ is a cactus representation for $\mathcal{NC}(G)$. Therefore $d_H(X, Y) = d_G(X, Y) > 0$, and there is an edge directed from $u$ to $v$ in $\mathcal{A}'$. Finally, if both $u, v$ are 2-junction vertices in $\mathcal{U}$, let $C$ be the cycle in $\mathcal{U}$ containing $u, v$. It is clear that there is a path of length at least 2 directed from $u$ to $v$ in $\mathcal{A}$ such that all internal vertices of this path are 1-junction in $\mathcal{U}$. By the previous two cases, we conclude that this directed path is also in $\mathcal{A}'$. In any case, we have $u$ is a predecessor of $v$ in $\mathcal{A}'$. Thus the claim is justified, and hence we can list all non-trivial min-cuts as follows.
	
	\begin{enumerate}[resume]
		\item Apply the method of Gusfield and Naor~\cite[Section 3.2]{Gusfield1993} to list, for every closed set of the DAGs of $\mathcal{D}'$, the $\lambda$ edges that go across it. We interpret them as the min-cuts of $H$ represented in $\mathcal{K}_1$. As it can be done in $O(\lambda)$ time per min-cut (closed set)~\cite{Gusfield1993}, and there are $O(n^2 / \delta^2)$ min-cuts in $H$, we list all min-cuts of $H$, including all non-trivial min-cuts of $G$, in $O(n^2 /\delta)$ time. At last we check if there are any missing trivial min-cuts of $G$ in $O(n + m)$ time.
	\end{enumerate}
	
	The correctness of the algorithm is clear. As the running time for Steps 1 to 5 is $\tilde{O}(m) + O(n^2 /\delta)$ in total, the proof of Theorem~\ref{thm:enumeration} is completed.
	 
	%\small
	%\bibliographystyle{abbrev}
	%\input{paper.bbl}
	\bibliographystyle{abbrv}
	\bibliography{paper}

\end{document}